\pdfoutput=1
\RequirePackage{ifpdf}
\ifpdf
\documentclass[10pt,pdftex]{amsart}
\else
\documentclass[10pt,dvips]{amsart}
\fi

\ifpdf
\fi
\usepackage[bookmarks, 
colorlinks=false, backref=false,plainpages=false]{hyperref}
\usepackage[english]{babel}

\usepackage[letterpaper]{geometry}

\usepackage{graphicx}
\usepackage{amsmath,amssymb,amsthm}

\usepackage{booktabs}

\newtheorem{thm}{Theorem}[section]
\newtheorem{cor}[thm]{Corollary}

\usepackage{amsaddr}

\def\XXint#1#2#3{{\setbox0=\hbox{$#1{#2#3}{\int}$}
\vcenter{\hbox{$#2#3$}}\kern-.5\wd0}}

\def\f12{\frac12}          

\newcommand{\bdm}{\begin{displaymath}}
\newcommand{\edm}{\end{displaymath}}
\newcommand{\beq}{\begin{equation}}
\newcommand{\eeq}{\end{equation}}
\newcommand{\beqa}{\begin{eqnarray}}
\newcommand{\eeqa}{\end{eqnarray}}
\newcommand{\beqas}{\begin{eqnarray*}}
\newcommand{\eeqas}{\end{eqnarray*}}

\numberwithin{equation}{section}

\title[Why the Kellogg Mesh Is Radial]{Why the Kellogg Mesh Is Radial: A Mathematical Explanation of a Classical Computational Benchmark}
\author[S. Zhang]{Shun Zhang}
\address{Department of Mathematics, City University of Hong Kong, Kowloon Tong, Hong Kong, China}
\email{shun.zhang@cityu.edu.hk}
\thanks{This work was supported in part by
Research Grants Council of the Hong Kong SAR, China, under the GRF Grant Project No. CityU 11316222, CityU 11305025}
\keywords{adaptive finite elements, Kellogg problem, interface problem,
discontinuous coefficient, energy norm, equidistribution, mesh adaptation}
\subjclass[2020]{65N30, 65N50, 35J25, 65N15}
\date{\today}

\newcommand{\dd}{\,\mathrm{d}}
\newcommand{\Tcal}{\mathcal{T}}

\newcommand{\Pcal}{\mathcal{P}}
\newcommand{\bsig}{\boldsymbol{\sigma}}
\newcommand{\Ncal}{\mathcal{N}}
\newcommand{\gradb}{\nabla_{\Pcal}}

\newcommand{\Jcal}{\mathcal{J}}

\begin{document}

\begin{abstract}
Adaptive finite element computations on Kellogg's checkerboard interface problem produce a mesh that the field reads as a check: strong refinement toward the point where the four subdomains meet, and no angular structure.  This paper explains why that reading is correct and how far it extends.  We prove that the singular solution $u(r,\theta)=r^\gamma\mu(\theta)$ satisfies the exact identities
\[
  \kappa|\nabla u|^2=\Lambda\, r^{2\gamma-2},
  \qquad
  \kappa|\gradb^2u|_F^2 =2(1-\gamma)^2\Lambda\, r^{2\gamma-4},
\]
where $|\cdot|_F$ is the Frobenius norm, $\gradb^2$ is the Hessian taken separately in each quadrant, and $\Lambda=\gamma^2\cos^2(\pi\gamma/4)$.  The point is what has disappeared: the right-hand sides depend on $r$ alone, although $\kappa$ and $u$ each depend on the angle as well.  Both identities extend across the interfaces, and the elements at the crossing point balance in the same way.  Combined with the principle of equal discretization-error distribution, they show that the target element density is radial, so a correct mesh should display nothing but refinement toward the centre.  A Kellogg mesh that is not radial is therefore visible evidence that the computation is not following the coefficient-weighted local difficulty.  The reading is specific to this benchmark: on a second interface problem, whose weighted difficulty is not radial, the same estimator correctly produces a strongly material-biased mesh, with a computed element-count ratio of $3.934$ against the predicted $4$.  A byproduct removes a practical nuisance.  The benchmark constants, normally copied between papers as sixteen-digit numbers obtained from a nonlinear solve, are values of one elementary formula, so the problem data can be generated from $\gamma$ alone at any precision.
\end{abstract}

\maketitle

\section{A familiar mesh whose explanation is missing}
\label{sec:intro}

Adaptive computations on the Kellogg checkerboard problem produce a picture, and the field has long read that picture as a check.  A convincing computation refines deeply toward the interface crossing while remaining concentric, showing no angular structure and no excess refinement along the coefficient interfaces.  A persistent preference for two opposite quadrants, or refinement driven mainly along the coefficient interfaces, has instead been treated as evidence that the method is responding to the wrong feature of the problem.  Chen and Dai's comparison of adaptive estimators made this visual distinction particularly clear \cite{ChenDai}.  Figure~\ref{fig:opening-meshes} shows the contrast in the form it usually takes.

Two questions follow, and this paper answers both.  Why is the concentric picture the correct one?  And how far does the reading extend: is a radial mesh what any coefficient-robust method produces, or is it specific to this benchmark?  We prove that the correct target element density for the Kellogg problem is exactly radial, and we show that the radial appearance is specific to it.  Figure~\ref{fig:quadratic-interface-mesh} reports the same estimator and the same adaptive loop applied to a second interface problem, in which the coefficient jumps across a single straight interface: there the computed mesh is strongly material biased, with an element-count ratio of $3.934$ between the two halves against the value $4$ predicted by the analysis below.  So the reading of Figure~\ref{fig:opening-meshes} is correct, and it is a reading of this benchmark rather than of robustness in general.  The purpose of the paper is not to discover the mesh pattern but to explain why it is the right one, and to say what it does and does not certify.

\begin{figure}[t]
\centering
\begin{minipage}[t]{0.48\textwidth}
  \centering
  \includegraphics[width=\textwidth]{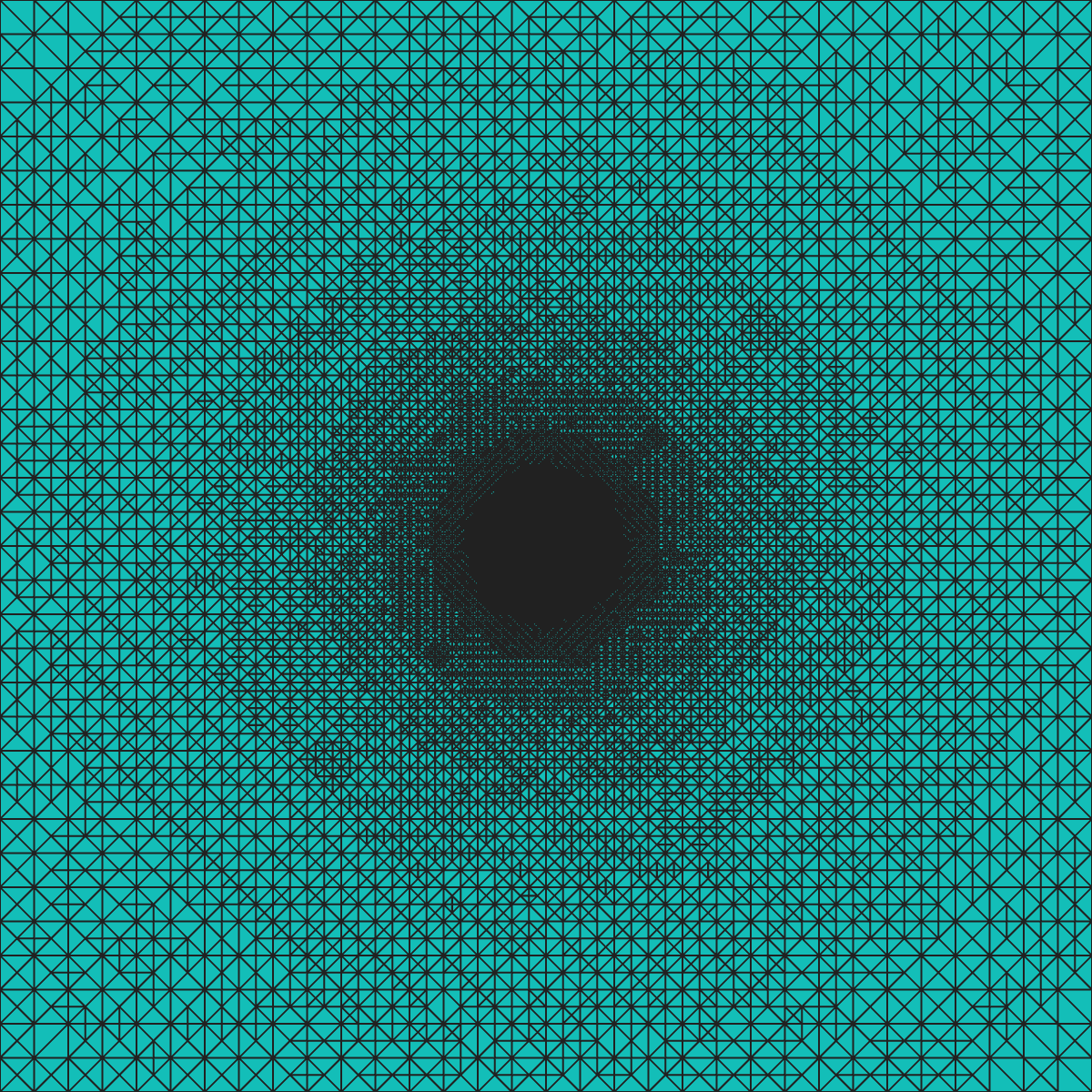}\\[-1mm]
  {\small (a) $H(\mathrm{div})$ recovery-based estimator}
\end{minipage}
\hfill
\begin{minipage}[t]{0.48\textwidth}
  \centering
  \includegraphics[width=\textwidth]{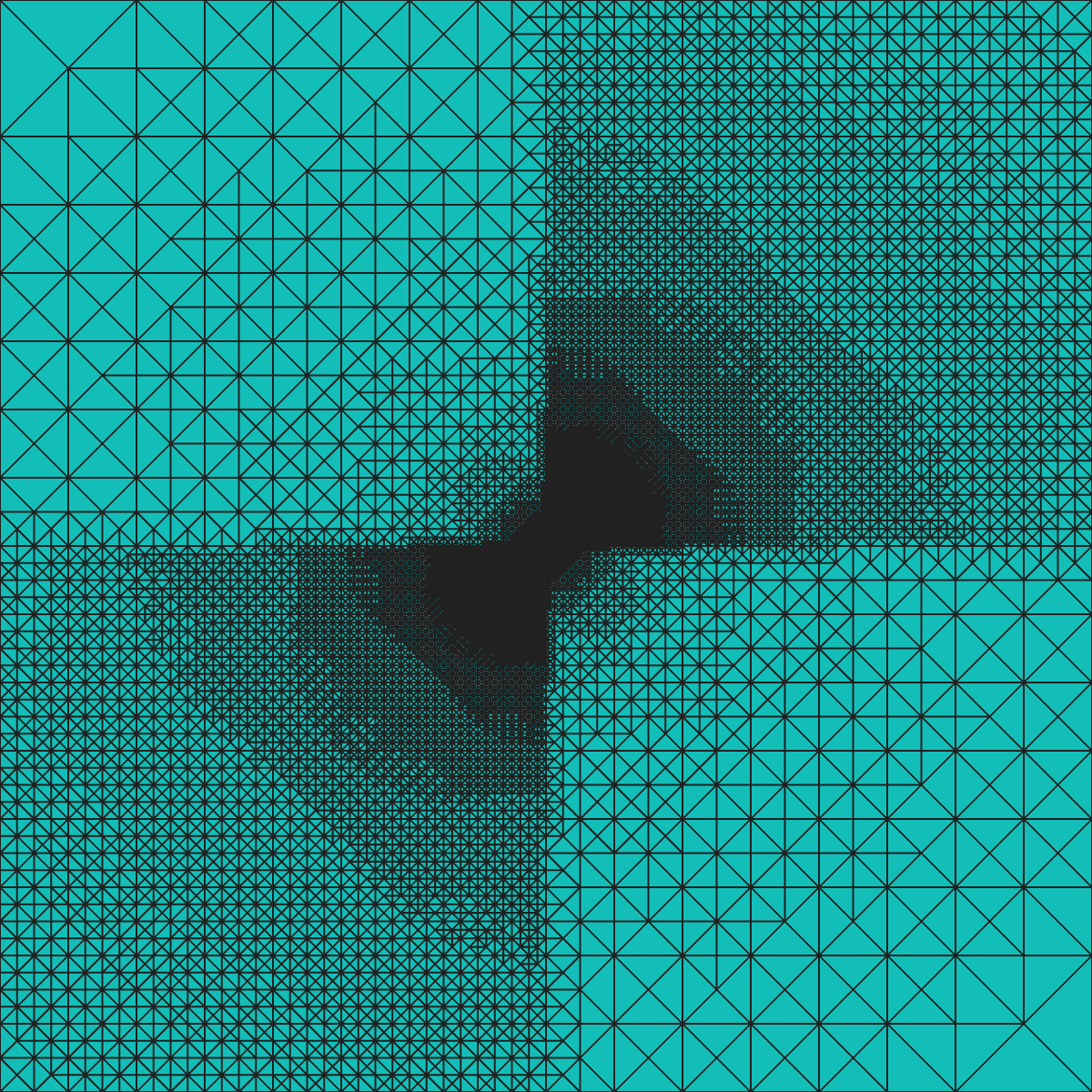}\\[-1mm]
  {\small (b) residual estimator without the coefficient weight}
\end{minipage}
\caption{
Adaptive meshes for the Kellogg problem with $\gamma=0.1$ and $R\approx161.4476$, at comparable element counts.  In (a) the mesh is driven by the $H(\mathrm{div})$ recovery-based estimator of \cite{CZ09}, which respects the coefficient weight.  In (b) it is driven by a residual estimator from which the reciprocal edge-diffusion scaling has been dropped, so that the squared edge contribution is multiplied by the corresponding edge diffusion scale.
Element counts by quadrant, in the order $Q_1,Q_2,Q_3,Q_4$, are $26\,306$, $25\,628$, $26\,347$, $25\,639$ for (a), with $103\,920$ elements in all, and $47\,308$, $5\,664$, $47\,306$, $5\,664$ for (b), with $105\,942$ elements.  Writing the ratio of the two high-coefficient quadrants to the two low-coefficient ones, (a) gives $1.03$ and (b) gives $8.35$.  Note also that in (a) there is no visible excess refinement along the coefficient interfaces: away from the crossing the refinement is concentric.
}
\label{fig:opening-meshes}
\end{figure}

\begin{figure}[t]
\centering
\includegraphics[width=0.50\textwidth]{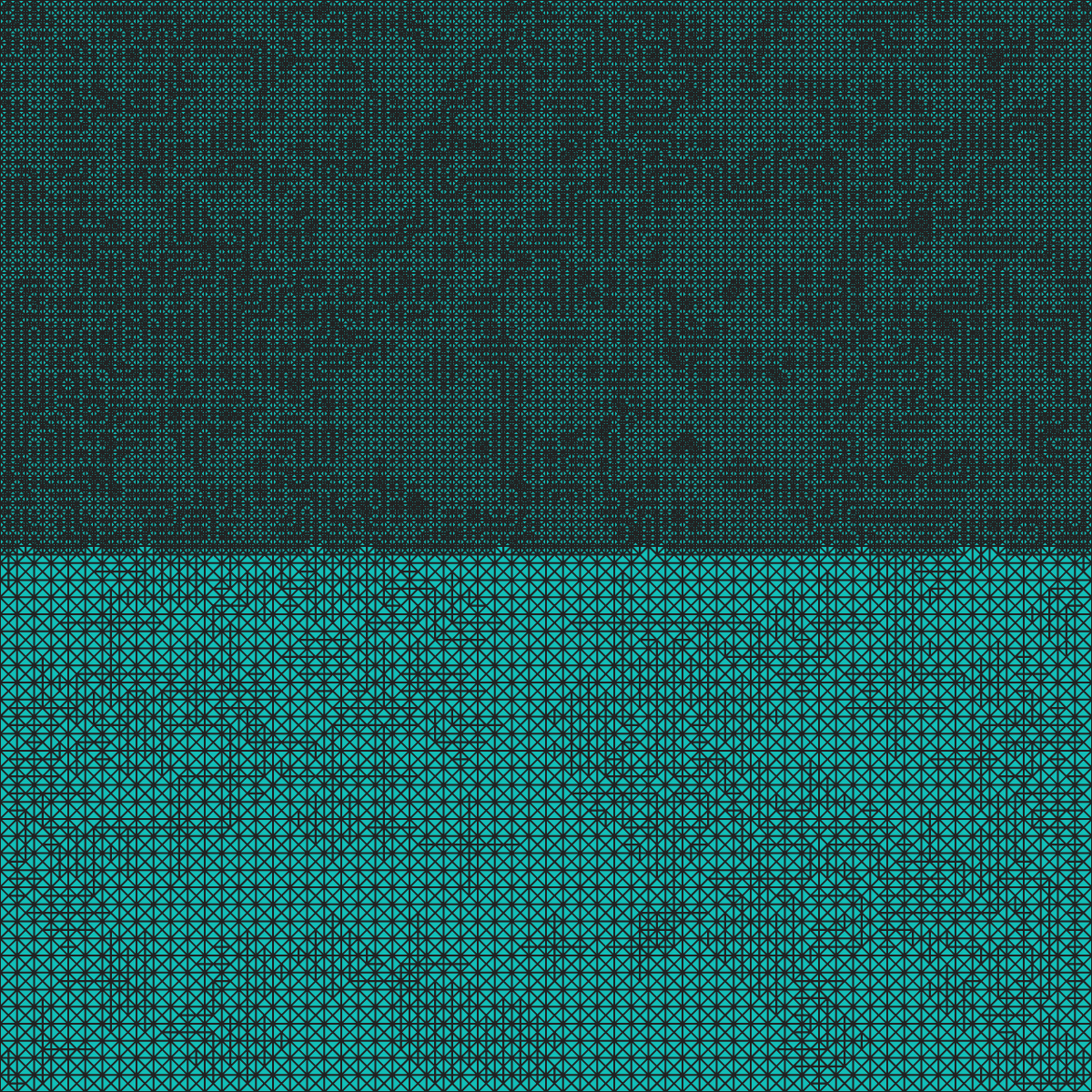}
\caption{
Adaptive mesh for a second interface problem, in which the coefficient jumps by a factor $q=16$ across the single interface $x_2=0$ and the weighted difficulty is not radial; the problem is stated in section~\ref{sec:symmetry}.  The mesh was computed with the estimator and adaptive loop of Figure~\ref{fig:opening-meshes}(a).  The lower half-square carries the smaller coefficient and the upper the larger.  At $52\,790$ elements they contain $10\,700$ and $42\,090$ elements, an observed ratio of $3.934$ against the predicted $\sqrt q=4$.
}
\label{fig:quadratic-interface-mesh}
\end{figure}

For completeness, we recall the benchmark.  Let $\Omega=(-1,1)^2$ and let the diffusion coefficient be $\kappa(x)=R$ on the first and third quadrants and $\kappa(x)=1$ on the second and fourth.  Consider
\begin{equation}\label{eq:cd-problem}
  -\nabla\cdot(\kappa\nabla u)=0 \quad\hbox{in }\Omega,
  \qquad
  u=g:=u_{\rm exact}\big|_{\partial\Omega} \quad\hbox{on }\partial\Omega ,
\end{equation}
whose exact solution is $u(r,\theta)=r^\gamma\mu(\theta)$ with
\begin{equation}\label{eq:cd-mu}
  \mu(\theta)=
  \begin{cases}
   \cos\bigl((\tfrac\pi2-\sigma)\gamma\bigr)\,
   \cos\bigl((\theta-\tfrac\pi2+\rho)\gamma\bigr),
     & 0\le\theta\le\tfrac\pi2,\\[1mm]
   \cos(\rho\gamma)\,\cos\bigl((\theta-\pi+\sigma)\gamma\bigr),
     & \tfrac\pi2\le\theta\le\pi,\\[1mm]
   \cos(\sigma\gamma)\,\cos\bigl((\theta-\pi-\rho)\gamma\bigr),
     & \pi\le\theta\le\tfrac{3\pi}2,\\[1mm]
   \cos\bigl((\tfrac\pi2-\rho)\gamma\bigr)\,
   \cos\bigl((\theta-\tfrac{3\pi}2-\sigma)\gamma\bigr),
     & \tfrac{3\pi}2\le\theta\le2\pi,
  \end{cases}
\end{equation}
where the four numbers $\gamma,\rho,\sigma,R$ must satisfy the nonlinear system
\begin{equation}\label{eq:cd-system}
  \left\{
  \begin{aligned}
   R&=-\tan\bigl((\tfrac\pi2-\sigma)\gamma\bigr)\cot(\rho\gamma),\\
   1/R&=-\tan(\rho\gamma)\cot(\sigma\gamma),\\
   R&=-\tan(\sigma\gamma)\cot\bigl((\tfrac\pi2-\rho)\gamma\bigr),
  \end{aligned}\right.
\end{equation}
together with the admissibility restrictions $0<\gamma<2$, $\max(0,\pi\gamma-\pi)<2\gamma\rho<\min(\pi\gamma,\pi)$, and $\max(0,\pi-\pi\gamma)<-2\gamma\sigma<\min(\pi,2\pi-\pi\gamma)$.  These are Kellogg's matching relations for perpendicular interfaces \cite{Kellogg}, in the form used by Chen and Dai \cite{ChenDai}.

For the standard choice $\gamma=0.1$ the system is traditionally solved numerically, yielding
\begin{equation}\label{eq:cd-numbers}
  R\approx161.4476387975881,
  \qquad \rho=\pi/4,
  \qquad \sigma\approx-14.92256510455152 .
\end{equation}

Two observations about this formulation will be useful below.  First, the constants \eqref{eq:cd-numbers} look arbitrary and are usually copied from one paper to the next.  Corollary~\ref{cor:closed} below shows that they are values of one elementary formula,
\begin{equation}\label{eq:closed-intro}
  R=\cot^2\!\left(\frac{\pi\gamma}{4}\right),
  \qquad
  \rho=\frac{\pi}{4},
  \qquad
  \sigma=\frac{\pi}{4}-\frac{\pi}{2\gamma},
\end{equation}
so no nonlinear solve is needed to set up the benchmark and its data can be generated from $\gamma$ alone at any precision. Second, nothing in \eqref{eq:cd-mu}--\eqref{eq:cd-numbers} suggests any symmetry: the coefficient is asymmetric under a quarter turn, and so is $\mu$.  The point of this paper is that the coefficient-weighted local difficulty built from $u$ is nevertheless radially symmetric.

Kellogg's original work \cite{Kellogg} concerned regularity for elliptic equations with intersecting interfaces, and modes of this homogeneous form are what it produced.  At $\gamma=0.1$ the solution lies in $H^{1+\delta}(\Omega)$ only for $\delta<\gamma$, so it is severely singular; for larger $\gamma$ the global exponent is limited by the gradient jumps across the interfaces as well, and the analysis below uses elementwise regularity rather than any global exponent.  For the two-phase checkerboard the contrast $R$ and the exponent $\gamma$ are tied by a single formula, so an arbitrarily large jump and an arbitrarily low regularity are the same statement.  A distinguishing feature of the benchmark is that the singularity lies at the interior crossing of the four subdomains.  The energy therefore accumulates precisely where the coefficient is least well behaved.  This combination of low regularity, large contrast, and intersecting interfaces cannot be separated and made mild one factor at a time, which is what makes the problem a standard test for robustness.

What robustness asks of a method is easy to state.  Since the energy norm $\|\kappa^{1/2}\nabla\cdot\|_0$ is fixed by the operator itself, a discretization is \emph{robust} when it respects that norm and its coefficient weighting, with constants independent of the coefficient contrast.  A substantial literature has developed coefficient-robust analysis for several discretizations and estimator families.  The requirement that the constants be independent of the jump in $\kappa$ goes back to Bernardi and Verf\"urth \cite{BV}, whose residual analysis for discontinuous coefficients set the pattern, and was developed further by Petzoldt \cite{Petzoldt}.  Recovery-based estimators were built for conforming, mixed and nonconforming elements, and then sharpened \cite{CZ09,CZ10,CHZzz17}; residual-based estimators were carried to the Crouzeix--Raviart and discontinuous Galerkin approximations, together with the matching a priori theory \cite{CYZ11,CHZmc17,CHZ17}; and equilibrated-residual estimators were made robust for conforming elements \cite{CZ12,ZhangThesis}.  In parallel, Ern, Vohral\'ik and their coauthors pursued guaranteed and fully robust bounds by reconstructing an $H(\mathrm{div})$ flux, with stability constants independent of the diffusion heterogeneities \cite{ENV07,EV09,ESV10,Vohralik2011,DiPietroErn}. Robust flux reconstruction for discontinuous coefficients has been taken up more recently by Capatina, Gouasmi and He \cite{CGH24}.

The example itself entered the adaptive literature through Morin, Nochetto and Siebert \cite{MNS}, and the computations of Chen and Dai \cite{ChenDai} were the ones that displayed the correct mesh; it is largely through the latter that the benchmark passed into the work on robust estimators.  It has since spread further, into broader benchmark collections and newer approximation technologies \cite{Mitchell,IGAActa,LiuCai2022}.

The same robustness requirement also has consequences for spatial approximation. Respecting the weighted norm already forced a refinement of the a priori theory: a bound written with a single global regularity exponent does not distinguish where the approximation difficulty is located, and for this example that difficulty is extremely uneven: the solution has very low regularity on the elements touching the origin and is analytic away from it. Replacing the global exponent by elementwise exponents yields the locally optimal estimate, established for the Crouzeix--Raviart and discontinuous Galerkin approximations in \cite{CHZ17} and for the Raviart--Thomas and Brezzi--Douglas--Marini mixed approximations in \cite{Z20}, and that estimate is the basis on which adaptive methods pursue equal discretization-error distribution \cite{NV2012}.  Carried one step further, the same requirement predicts the spatial structure of the mesh.  This last step appears not to have been taken, and it is the subject of this paper.

Previous studies of the Kellogg problem have focused mainly on its convergence history, as a demonstration that the optimal energy decay $(\#\Tcal)^{-1/2}$ is recovered in the very singular case $\gamma=0.1$; this message runs from Morin--Nochetto--Siebert \cite{MNS}, through the survey of Nochetto--Siebert--Veeser \cite{NSV2009}, to the 2024 Acta Numerica review of Bonito--Canuto--Nochetto--Veeser \cite{BCNV2024}.  Recovering that rate on this benchmark is a demanding achievement, and it is natural that nothing further was asked.  But a global decay statement does not locate the degrees of freedom:
\begin{equation}\label{eq:rate-not-mesh}
  \boxed{\text{rate optimality does not determine spatial allocation}.}
\end{equation}
This distinction explains why a further question stayed open: why should the spatially correct Kellogg mesh be centered, with no refinement signature attached to the coefficient interfaces?

The usual regularity statement
\[
u\sim r^\gamma,\qquad |\nabla u|\sim r^{\gamma-1}
\]
explains why refinement must accumulate at the origin, but it does not explain how that refinement should be distributed in angle.  In fact, the coefficient and the exact solution are both strongly asymmetric under a quarter turn.  For the classical choice $\gamma=0.1$, the contrast is about $161.45$, and neighboring sector amplitudes differ by a factor of order $\sqrt R$.  There is therefore no obvious reason, at the level of $u$ or $|\nabla u|$, for a balanced mesh.

The missing fact is that an exact coefficient-weighted quantity is radial.  We prove that Kellogg's homogeneous mode satisfies
\begin{equation}\label{eq:intro-main}
 \kappa|\nabla u|^2=\Lambda\, r^{2\gamma-2},
 \qquad
 \kappa|\gradb^2u|_F^2=2(1-\gamma)^2\Lambda\, r^{2\gamma-4},
\end{equation}
where $|\cdot|_F$ denotes the Frobenius norm, $|M|_F^2=\sum_{i,j}M_{ij}^2$, and $\gradb^2$ is the Hessian broken over the partition $\Pcal=\{Q_1,Q_2,Q_3,Q_4\}$: since $\nabla u$ jumps across the interfaces, second derivatives of $u$ exist only piecewise, and $\gradb^2u$ is the matrix of second derivatives formed inside each $Q_i$ separately.  The first identity is an exact radial energy law: although neither $\kappa$ nor $u$ is invariant under a quarter turn, their combination is.  Although $\kappa$, $\nabla u$ and the broken Hessian each have distinct one-sided traces on the interfaces, the two weighted combinations have identical ones, so \eqref{eq:intro-main} holds at every point except the origin.  In particular the weighted difficulty is continuous across the interfaces.

The essential point is what \eqref{eq:intro-main} does \emph{not} contain.  Both right-hand sides are functions of $r$ alone: the angle has disappeared, and so has the coefficient, although $\kappa$ and $u$ each depend on both.  This is what makes the benchmark readable by eye.  The ideal local density is radial, nothing in it distinguishes one direction from another, and the interfaces carry no refinement signature of their own.  A correct mesh should therefore show nothing but refinement toward the center.  The second identity gives the radial piecewise-$H^2$ approximation measure on every element that does not touch the origin.  On the few that do, the $H^2$ seminorm is not integrable and the corresponding local scale is supplied by fractional regularity; the same coefficient-weighted amplitude balance gives that contribution the same form in all four quadrants.

We then use the usual principle of \emph{equal discretization-error distribution}: an ideally adapted mesh makes the local discretization errors comparable \cite{NV2012,CHZ17}.  Once the local error scale is shown to depend on the radius alone, equidistribution forces the mesh to have the same property.  The argument below therefore proceeds in three steps: the variational problem identifies the correct coefficient weight and the elementwise form of the robustness requirement; the Kellogg matching makes that weighted quantity radial; and equidistribution converts radiality into the familiar adaptive mesh.

\section{What robustness requires}
\label{sec:energy}

Consider
\begin{equation}\label{eq:diffusion}
  -\nabla\cdot(\kappa\nabla p)=f \quad\hbox{in }\Omega,
  \qquad p=0 \quad\hbox{on }\partial\Omega,
\end{equation}
where $\kappa$ is positive and piecewise constant.  Here $p$ denotes the solution of this generic model problem; the letter $u$ remains reserved for the Kellogg solution of \eqref{eq:cd-problem}.  Homogeneous data are assumed only so that the admissible space is $H_0^1(\Omega)$; for data such as \eqref{eq:cd-problem} the same computation runs on the corresponding affine space.  The weak problem is
\begin{equation}\label{eq:weak}
  (\kappa\nabla p,\nabla v)=(f,v)
  \qquad\forall v\in H_0^1(\Omega),
\end{equation}
and $p$ is equivalently the minimizer over $H_0^1(\Omega)$ of the energy
\begin{equation}\label{eq:energy-functional}
  \Jcal(v):=\tfrac12(\kappa\nabla v,\nabla v)-(f,v).
\end{equation}

It is the minimization form that should be kept in view, because it fixes the norm before any method is chosen.  Completing the square in \eqref{eq:energy-functional} and using \eqref{eq:weak},
\begin{equation}\label{eq:complete-square}
  \Jcal(v)=\tfrac12\bigl\|\kappa^{1/2}\nabla(v-p)\bigr\|_0^2
          -\tfrac12\bigl\|\kappa^{1/2}\nabla p\bigr\|_0^2
  \qquad\forall v\in H_0^1(\Omega),
\end{equation}
and the second term does not depend on $v$.  Hence for \emph{any} set $S\subset H_0^1(\Omega)$,
\begin{equation}\label{eq:ritz-best}
  \operatorname*{arg\,min}_{v\in S}\Jcal(v)
  =\operatorname*{arg\,min}_{v\in S}\bigl\|\kappa^{1/2}\nabla(p-v)\bigr\|_0 .
\end{equation}
Minimizing the energy \emph{is} minimizing the coefficient-weighted error.  This uses no linearity of $S$, no orthogonality relation, and no finite element structure.  Nonconforming, discontinuous Galerkin and mixed discretizations do not sit inside $H_0^1(\Omega)$ and measure their errors in their own norms, on the broken gradient or on the flux, but those norms are built to reproduce the same coefficient weighting.  What \eqref{eq:ritz-best} identifies is the continuous scale they all have to match.  When $S=V_h$ is a subspace, \eqref{eq:ritz-best} specializes to the familiar statement that the Galerkin solution $p_h\in V_h$ is the best approximation,
\begin{equation}\label{eq:best}
  \|\kappa^{1/2}\nabla(p-p_h)\|_0
  =\inf_{v_h\in V_h}\|\kappa^{1/2}\nabla(p-v_h)\|_0 .
\end{equation}

The weight $\kappa^{1/2}$ used throughout is therefore dictated by the variational structure, rather than introduced for the subsequent analysis.  A discretization deserves to be called robust when it respects this energy norm and its coefficient weighting, with constants independent of the coefficient contrast.  The mesh question of this paper puts that same requirement to the triangulation.

For the mesh question, robustness alone is not enough; one also needs its locally optimal counterpart.  From here on we return to the Kellogg pair $(\kappa,u)$ of \eqref{eq:cd-problem}.  Let $\Tcal$ be a shape-regular, interface-fitted triangulation of $\Omega$ without hanging nodes, with $h_K=\operatorname{diam}K$ for $K\in\Tcal$; interface fitted means that the coordinate axes are unions of edges, so each $K$ lies in one closed quadrant and $\kappa|_K$ is constant.  A bound in which the regularity of $u$ enters through one global exponent may be robust, but it is blunt, and for the present mesh question it is useless: it does not distinguish where the approximation difficulty is located.  For the Kellogg solution that difficulty is extremely uneven, since the solution has very low regularity on the elements touching the origin and is analytic elsewhere.  The locally optimal form replaces the global exponent by elementwise exponents.  If $u|_K\in H^{1+s_K}(K)$ with $0<s_K\le1$, which is the only range a linear element uses, the resulting local approximation scale is
\begin{equation}\label{eq:local-scale}
  \varepsilon_K
  :=h_K^{s_K}\kappa_K^{1/2}|\nabla u|_{s_K,K},
\end{equation}
where $|\cdot|_{s,K}$ is the Sobolev seminorm of order $s$ applied to the vector field $\nabla u$; for $s=1$ this is $\bigl(\int_K|\nabla^2u|_F^2\bigr)^{1/2}$, equivalent to the usual scalar $H^2$ seminorm.  Writing the scale through $\nabla u$ rather than through $u$ is what makes the fractional case available.  Local scales of this form go back to the local polynomial approximation theory of Dupont and Scott \cite{DupontScott}.  Locally optimal, coefficient-robust approximation estimates built on them are available for several finite element families in two dimensions: for conforming elements when the local regularity permits nodal interpolation, for the Crouzeix--Raviart and discontinuous Galerkin approximations \cite{CHZ17}, and for the mixed approximations \cite{Z20}.  The discrete error variable and the norm in which it is measured differ from family to family, but in each case the local quantity controlling the estimate is $\varepsilon_K$, with a constant \emph{independent of the jump in $\kappa$} and depending only on the usual approximation parameters.

The local scale \eqref{eq:local-scale} is also the quantity on which adaptivity acts: it is the local error scale that an ideally adapted mesh should equidistribute \cite{CHZ17,NV2012}.  Everything below uses \eqref{eq:local-scale} in that role only: no discrete solution, no particular finite element family, and no marking strategy enters the mesh derivation.

On an interface-fitted element not touching the origin, $s_K=1$ and
\begin{equation}\label{eq:p1-local}
  \varepsilon_K^2
  =h_K^2\kappa_K|\nabla u|_{1,K}^2
  =h_K^2\int_K\kappa\,|\nabla^2u|_F^2\,\dd x,
  \qquad
  |M|_F^2:=\sum_{i,j}M_{ij}^2 .
\end{equation}
Here $|\cdot|_F$ denotes the Frobenius (entrywise Euclidean) norm.  The seminorm $|\nabla u|_{1,K}$ is by definition the sum over all second partial derivatives, $|\nabla u|_{1,K}^2=\sum_{i,j}\int_K|\partial_{ij}u|^2$, which is exactly $\int_K|\nabla^2u|_F^2$; the mixed derivative is counted with the multiplicity it has in that double sum.  The constant $\kappa_K$ was taken inside the integral because $\kappa|_K\equiv\kappa_K$ on an interface-fitted element.  For the same reason the Hessian and the broken Hessian coincide on such an element, so we write $\nabla^2u$ in elementwise statements and $\gradb^2u$ in global ones.  On the finitely many elements whose closure contains the origin, \eqref{eq:p1-local} is unavailable and one must use $s_K<\gamma$ in \eqref{eq:local-scale} instead.  The mesh argument therefore needs two local measures, a weighted broken $H^2$ one away from the origin and a weighted fractional one at it, and the checkerboard symmetry proved next balances both.

\section{Kellogg's checkerboard and its hidden symmetry}
\label{sec:symmetry}

We keep the data of \eqref{eq:cd-problem}: on $\Omega=(-1,1)^2$ with quadrants $Q_1,\ldots,Q_4$,
\begin{equation}\label{eq:kappa}
 \kappa(x)=
 \begin{cases}
 R,&x\in Q_1\cup Q_3,\\
 1,&x\in Q_2\cup Q_4,
 \end{cases}
 \qquad R>1.
\end{equation}
The interface conditions follow from membership in the two natural spaces: the potential satisfies $u\in H^1(\Omega)$ and the flux
\begin{equation}\label{eq:flux}
  \bsig:=-\kappa\nabla u
\end{equation}
satisfies $\bsig\in H(\mathrm{div};\Omega)$, since $\nabla\cdot\bsig=0$ by \eqref{eq:cd-problem}.  Traces of $H^1$ functions match across an interface, and normal traces of $H(\mathrm{div})$ fields match across an interface, so
\begin{equation}\label{eq:transmission}
 [u]=0,\qquad [\bsig\cdot n]=-[\kappa\partial_nu]=0 .
\end{equation}
The two continuities are of different objects, the potential and the normal flux.
For a homogeneous singular mode
\begin{equation}\label{eq:mode}
 u(r,\theta)=r^\gamma\mu(\theta),\qquad 0<\gamma<1,
\end{equation}
the coefficient is constant inside each open quadrant, so $\nabla\cdot(\kappa\nabla u)=\kappa\Delta u$ there and $u$ is harmonic.  In polar coordinates $\Delta u=u_{rr}+r^{-1}u_r+r^{-2}u_{\theta\theta}$, and substituting \eqref{eq:mode},
\begin{equation}\label{eq:laplacian}
  \Delta u
  =r^{\gamma-2}\Bigl[\underbrace{\gamma(\gamma-1)+\gamma}_{=\,\gamma^2}\,\mu+\mu''\Bigr]
  =r^{\gamma-2}\bigl(\mu''+\gamma^2\mu\bigr),
\end{equation}
so $\mu''+\gamma^2\mu=0$ in each quadrant.  Its general solution there is $a\cos(\gamma\theta)+b\sin(\gamma\theta)$, and writing $(a,b)=A_i(\cos\gamma\theta_i,\sin\gamma\theta_i)$ puts it in amplitude and phase form,
\begin{equation}\label{eq:sector}
 \mu_i(\theta)=A_i\cos\bigl(\gamma(\theta-\theta_i)\bigr).
\end{equation}
Thus $\gamma$ appears as the angular frequency.  The amplitude $A_i$ and the phase $\theta_i$ are not free parameters: they are determined, up to a single overall multiplicative constant, by the interface conditions \eqref{eq:angular-transmission} imposed below, and computing them is exactly what the transfer-matrix argument does.

The interfaces are rays, so the interface normal is $\pm e_\theta$ and $\partial_nu=\pm r^{-1}\partial_\theta u=\pm r^{\gamma-1}\mu'$; once a common orientation is fixed the sign is the same on both sides and cancels in the flux matching.  Conditions \eqref{eq:transmission} therefore read $r^{\gamma}[\mu]=0$ and $r^{\gamma-1}[\kappa\mu']=0$.  The powers of $r$ are the same on the two sides of an interface and cancel identically in $r$, which is why a single power $r^\gamma$ can meet the interface conditions at all radii simultaneously.  What survives is purely angular:
\begin{equation}\label{eq:angular-transmission}
  [\mu]=0,\qquad [\,\tau\,]=0,\qquad \tau:=\kappa\mu' .
\end{equation}
These are the angular traces of the two continuities in \eqref{eq:transmission}: $\mu$ carries the trace of $u$ and $\tau$ carries the normal trace of $\bsig$, up to the common power of $r$.  Note that $\mu'$ itself is \emph{not} continuous across an interface.  Every asymmetry below originates there.  Together with \eqref{eq:laplacian}, these are the matching conditions solved by \eqref{eq:cd-mu}--\eqref{eq:cd-system}.

The first cancellation is purely local.  In the orthonormal polar frame $\nabla u=u_r\,e_r+r^{-1}u_\theta\,e_\theta$, so \eqref{eq:mode} gives $\nabla u=\gamma r^{\gamma-1}\mu\,e_r+r^{\gamma-1}\mu'\,e_\theta$ and, in any one sector,
\begin{equation}\label{eq:sector-gradient}
 |\nabla u|^2
 =r^{2\gamma-2}\bigl(\gamma^2\mu^2+(\mu')^2\bigr)
 =\gamma^2A_i^2r^{2\gamma-2}.
\end{equation}
The second equality is the Pythagorean identity applied to \eqref{eq:sector}, but the structural reason is visible without solving: by \eqref{eq:laplacian},
\begin{equation}\label{eq:conservation}
  \frac{\dd}{\dd\theta}\bigl(\gamma^2\mu^2+(\mu')^2\bigr)
  =2\mu'\bigl(\mu''+\gamma^2\mu\bigr)=0 ,
\end{equation}
so $\gamma^2\mu^2+(\mu')^2$ is the conserved energy of the angular oscillator.  For the Hessian, fix the branch of $z^\gamma$ on the sector under consideration, which is possible since the sector is simply connected and does not contain the origin in its interior.  With $z=x_1+ix_2=re^{i\theta}$ and $\lambda=A_ie^{-i\gamma\theta_i}$ one has $\operatorname{Re}(\lambda z^\gamma)=A_ir^\gamma\cos\bigl(\gamma(\theta-\theta_i)\bigr)$, which by \eqref{eq:sector} is $u$ on that sector.  So $u=\operatorname{Re}(\lambda z^\gamma)$ there, with $|\lambda|=|A_i|$.  For any $\varphi=\operatorname{Re}\Phi$ with $\Phi$ holomorphic, $\varphi_{x_1x_1}=\operatorname{Re}\Phi''$, $\varphi_{x_1x_2}=-\operatorname{Im}\Phi''$ and $\varphi_{x_2x_2}=-\operatorname{Re}\Phi''$, so $\nabla^2\varphi$ is trace free and, summing the four entries, $|\nabla^2\varphi|_F^2=2|\Phi''|^2$.  With $\Phi=\lambda z^\gamma$ and $\Phi''=\gamma(\gamma-1)\lambda z^{\gamma-2}$,
\begin{equation}\label{eq:sector-hessian}
 |\nabla^2u|_F^2
 =2\gamma^2(1-\gamma)^2A_i^2r^{2\gamma-4}.
\end{equation}
Thus all angular dependence disappears inside a quadrant.  The only issue is whether the weighted amplitudes $\kappa_iA_i^2$ agree from quadrant to quadrant.

That agreement is a consequence of the special perpendicular matching.  Set $Y=(\mu,\tau)^T$ with $\tau=\kappa\mu'$ as in \eqref{eq:angular-transmission}, so that both components of $Y$ are continuous across every interface, the first because $u\in H^1(\Omega)$ and the second because $\bsig\in H(\mathrm{div};\Omega)$.  This is why we work with $Y$ rather than with $(\mu,\mu')^T$.  Inside a sector $\kappa$ is constant, so $\mu''+\gamma^2\mu=0$ there and a solution is determined by the pair $(\mu,\mu')$ at any one point.  The two particular solutions $\cos(\gamma s)$ and $\gamma^{-1}\sin(\gamma s)$ have data $(1,0)$ and $(0,1)$ at $s=0$, so the solution carrying the data $(\mu(\theta),\mu'(\theta))$ is their combination with exactly those coefficients,
\[
\mu(\theta+s)=\cos(\gamma s)\,\mu(\theta)+\frac{\sin(\gamma s)}{\gamma}\,\mu'(\theta),
\qquad
\mu'(\theta+s)=-\gamma\sin(\gamma s)\,\mu(\theta)+\cos(\gamma s)\,\mu'(\theta),
\]
the second line being the $s$-derivative of the first.  Setting $s=\omega$ transports the data across a sector of width $\omega$, and eliminating $\mu'$ in favor of $\tau=\kappa\mu'$ gives, in a sector of coefficient $\kappa$,
\begin{equation}\label{eq:transfer}
 Y(\theta+\omega)=M_\kappa(\omega)Y(\theta),\qquad
 M_\kappa(\omega)=
 \begin{pmatrix}
 \cos(\gamma\omega)&\dfrac{\sin(\gamma\omega)}{\gamma\kappa}\\[2mm]
 -\gamma\kappa\sin(\gamma\omega)&\cos(\gamma\omega)
 \end{pmatrix}.
\end{equation}
The two off-diagonal entries depend reciprocally on $\kappa$, and therefore $\det M_\kappa(\omega)=\cos^2(\gamma\omega)+\sin^2(\gamma\omega)=1$.

The interface conditions enter here.  Both components of $Y$ are continuous across an interface by \eqref{eq:angular-transmission}, so the sector matrices compose with nothing in between: transporting through $Q_1$ and then through $Q_2$ carries $Y(0)$ to $Y(\pi)$ by the single matrix
\begin{equation}\label{eq:halfturn}
  N:=M_1(\pi/2)\,M_R(\pi/2).
\end{equation}
Had $\mu$ or $\tau$ jumped at $\theta=\pi/2$, a jump matrix would stand between the two factors and \eqref{eq:halfturn} would not be a product of sector matrices alone.  Multiplying out \eqref{eq:transfer} twice, and abbreviating $c=\cos(\pi\gamma/2)$ and $s=\sin(\pi\gamma/2)$,
\begin{equation}\label{eq:Nexplicit}
  N=\begin{pmatrix}
  c^2-Rs^2 & \dfrac{cs}{\gamma}\Bigl(1+\dfrac1R\Bigr)\\[3mm]
  -\gamma cs(1+R) & c^2-\dfrac{s^2}{R}
  \end{pmatrix},
\end{equation}
from which $\det N=1$ and $\operatorname{tr}N=2c^2-s^2(R+R^{-1})$ may be read off directly.

Since $\kappa$ is $\pi$-periodic, the same matrix carries $Y(\pi)$ to $Y(2\pi)$, so $N^2Y(0)=Y(2\pi)$.  Single-valuedness of $u$ gives $\mu(2\pi)=\mu(0)$, and continuity of $\tau$ across the closing interface $\theta=0$ gives $\tau(2\pi)=\tau(0)$.  Hence
\begin{equation}\label{eq:closing}
  N^2Y(0)=Y(0),
\end{equation}
and the two conditions used are exactly the two continuities of \eqref{eq:transmission}, one for each component.

The state is then an eigenvector of $N$ itself.  Suppose $Y(0)$ and $NY(0)$ were linearly independent.  By \eqref{eq:closing}, $N^2$ fixes $Y(0)$, and it also fixes $NY(0)$, since $N^2(NY(0))=N(N^2Y(0))=NY(0)$; so $N^2$ would fix a basis and $N^2=I$.  With $\det N=1$ this forces $N=\pm I$.  But the upper right entry of \eqref{eq:Nexplicit} is $\gamma^{-1}cs(1+R^{-1})$, which does not vanish for $0<\gamma<1$.  Hence $NY(0)=\lambda Y(0)$, and \eqref{eq:closing} gives $\lambda^2=1$.  Since $\det N=1$, the other eigenvalue is $\lambda$ as well, so $\operatorname{tr}N=2\lambda$.  By \eqref{eq:Nexplicit} and $c^2=1-s^2$,
\begin{equation}\label{eq:traceN}
  \operatorname{tr}N
  =2-\sin^2\Bigl(\frac{\pi\gamma}{2}\Bigr)\Bigl(2+R+\frac1R\Bigr)<2,
\end{equation}
strictly, since $0<\gamma<1$ and $R>1$.  Now $\lambda^2=1$ leaves only $\operatorname{tr}N=2\lambda=\pm2$, and \eqref{eq:traceN} excludes $+2$.  Therefore
\[
  \operatorname{tr}N=-2,\qquad \lambda=-1,
\]
and the mode is antiperiodic over a half turn.  Setting \eqref{eq:traceN} equal to $-2$ gives $\sin(\pi\gamma/2)=2\sqrt R/(1+R)$ and $\cos(\pi\gamma/2)=(R-1)/(R+1)$, the positive branches being the right ones since $0<\gamma<1$ places $\pi\gamma/2$ in $(0,\pi/2)$ and $R>1$, hence $\tan(\pi\gamma/4)=1/\sqrt R$, that is,
\begin{equation}\label{eq:Rgamma}
  R=\cot^2\left(\frac{\pi\gamma}{4}\right).
\end{equation}

With $\operatorname{tr}N=-2$ and $\det N=1$, both eigenvalues of $N$ equal $-1$, so $N+I$ is nilpotent; it is not the zero matrix, because of the off-diagonal entry in \eqref{eq:Nexplicit}.  Its kernel is therefore one dimensional: for a given contrast the mode is unique up to a multiplicative constant.  Reading the first row of $(N+I)Y(0)=0$ and simplifying with \eqref{eq:Rgamma} gives $\tau(0)=\gamma\sqrt R\,\mu(0)$; transporting through $Q_1$ gives $\tau(\pi/2)=-\gamma\sqrt R\,\mu(\pi/2)$, and antiperiodicity repeats the pattern at $\theta=\pi$ and $\theta=3\pi/2$.  Thus at every interface
\begin{equation}\label{eq:interface-eigenvector}
  |\tau|=\gamma\sqrt R\,|\mu|=\gamma\sqrt{\kappa_+\kappa_-}\,|\mu| .
\end{equation}

Consider an interface separating coefficients $\kappa_i$ and $\kappa_j$, and evaluate everything at a point of that interface, where $\mu$ and $\tau$ take common values by \eqref{eq:angular-transmission}.

Approach the point from the $\kappa_i$ side.  There $\tau=\kappa_i\mu'$, so $\mu'=\tau/\kappa_i$, and the conserved quantity \eqref{eq:conservation}, whose value is $\gamma^2A_i^2$ by \eqref{eq:sector}, reads
\[
  \gamma^2A_i^2=\gamma^2\mu^2+(\mu')^2=\gamma^2\mu^2+\frac{\tau^2}{\kappa_i^2}.
\]
Multiplying by $\kappa_i$ gives
\[
  \kappa_i\gamma^2A_i^2=\kappa_i\gamma^2\mu^2+\kappa_i\frac{\tau^2}{\kappa_i^2}
  =\kappa_i\gamma^2\mu^2+\frac{\tau^2}{\kappa_i},
\]
and squaring \eqref{eq:interface-eigenvector} to $\tau^2=\gamma^2\kappa_i\kappa_j\mu^2$ turns the last term into $\gamma^2\kappa_j\mu^2$, so
\begin{equation}\label{eq:balance-step}
  \kappa_i\gamma^2A_i^2=\gamma^2(\kappa_i+\kappa_j)\mu^2 .
\end{equation}
The right-hand side is symmetric in $i$ and $j$.  Approaching the same point from the $\kappa_j$ side uses $\mu'=\tau/\kappa_j$ and the same $\mu$ and $\tau$, so it gives $\kappa_j\gamma^2A_j^2=\gamma^2(\kappa_j+\kappa_i)\mu^2$, the same number.  Hence $\kappa_iA_i^2=\kappa_jA_j^2$ at every interface, and repeating this across the checkerboard yields
\begin{equation}\label{eq:amplitude-balance}
 \boxed{
 \kappa_1A_1^2=\kappa_2A_2^2=\kappa_3A_3^2=\kappa_4A_4^2.}
\end{equation}
Equivalently,
\begin{equation}\label{eq:amplitude-ratio}
  A_2^2=A_4^2=R A_1^2=R A_3^2.
\end{equation}
The unweighted solution is therefore strongly asymmetric, but the coefficient-weighted sector amplitude is exactly balanced.  Together with the sector identities, this is already the statement needed for the mesh argument.

\begin{thm}[Hidden energy symmetry]\label{thm:main}
For Kellogg's perpendicular two-phase checkerboard and its homogeneous singular mode, there is a constant $\Lambda>0$ such that
\begin{equation}\label{eq:main-gradient}
  \boxed{\kappa(x)|\nabla u(x)|^2=\Lambda\, r^{2\gamma-2},}
\end{equation}
and
\begin{equation}\label{eq:main-hessian}
  \boxed{\kappa(x)|\gradb^2u(x)|_F^2
  =2(1-\gamma)^2\Lambda\, r^{2\gamma-4}.}
\end{equation}
Both right-hand sides are functions of $r$ alone.  Neither the angular variable nor the coefficient appears there, although each of $\kappa$ and $u$ depends on both.  The identities hold in the four open quadrants, and by the amplitude balance \eqref{eq:amplitude-balance} the one-sided traces of the two scalar quantities agree across every interface, so each admits a unique continuous extension to $\Omega\setminus\{0\}$.
\end{thm}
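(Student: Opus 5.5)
The proof assembles the sector identities with the amplitude balance, and then checks the interfaces. Fix a quadrant $Q_i$. On its interior $\kappa\equiv\kappa_i$, and by \eqref{eq:sector-gradient} we have $|\nabla u|^2=\gamma^2A_i^2r^{2\gamma-2}$. Multiplying by $\kappa_i$ gives
\[
\kappa|\nabla u|^2=\gamma^2\kappa_iA_i^2\,r^{2\gamma-2}.
\]
By \eqref{eq:amplitude-balance} the product $\kappa_iA_i^2$ is independent of $i$. We therefore define $\Lambda:=\gamma^2\kappa_1A_1^2$, which is positive because the mode is nontrivial: if some $A_i$ vanished, the balance would force all of them to vanish. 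This yields \eqref{eq:main-gradient} on every open quadrant with a single constant. In the same way, \eqref{eq:sector-hessian} gives $\kappa_i|\nabla^2u|_F^2=2\gamma^2(1-\gamma)^2\kappa_iA_i^2r^{2\gamma-4}=2(1-\gamma)^2\Lambda r^{2\gamma-4}$. On $Q_i$ the broken Hessian $\gradb^2u$ is by definition $\nabla^2(u|_{Q_i})$, so this is \eqref{eq:main-hessian} on the open quadrants.

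Next, the interface claim. The branch representation $u=\operatorname{Re}(\lambda z^\gamma)$ holds on each closed sector minus the origin, so $\nabla u|_{Q_i}$ and $\nabla^2u|_{Q_i}$ extend continuously up to the bounding rays. The one-sided traces of $\kappa|\nabla u|^2$ and $\kappa|\gradb^2u|_F^2$ from $Q_i$ are therefore $\Lambda r^{2\gamma-2}$ and $2(1-\gamma)^2\Lambda r^{2\gamma-4}$, evaluated at the interface point. These values are the same from both sides, precisely because the constant $\Lambda$ is common to all quadrants. Hence the two functions, defined on the open quadrants, agree with the continuous functions $\Lambda r^{2\gamma-2}$ and $2(1-\gamma)^2\Lambda r^{2\gamma-4}$ on $\Omega\setminus\{0\}$ minus the axes. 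The latter set is dense in $\Omega\setminus\{0\}$, so the continuous extension exists and is unique.

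Finally, I would record the value of $\Lambda$ to connect with the abstract. At an interface point, \eqref{eq:balance-step} gives $\Lambda=\gamma^2(\kappa_i+\kappa_j)\mu^2=\gamma^2(1+R)\mu^2$. Normalizing as in \eqref{eq:cd-mu} and using \eqref{eq:Rgamma}, one would identify $\Lambda=\gamma^2\cos^2(\pi\gamma/4)$. This is optional for the theorem as stated, which only asserts existence of $\Lambda>0$.

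The main obstacle is essentially already handled: it is the amplitude balance \eqref{eq:amplitude-balance}, which rests on the eigenvector relation \eqref{eq:interface-eigenvector}. The step needing some care here is the trace argument. Since $\nabla u$ itself jumps, one must phrase the statement for the scalar weighted quantities built from one-sided limits rather than for $\nabla u$ itself, and verify that the sector formulas remain valid up to the closed rays via the holomorphic representation.
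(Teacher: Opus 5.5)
Your proposal is correct and follows the paper's proof. You multiply the sector identities \eqref{eq:sector-gradient} and \eqref{eq:sector-hessian} by $\kappa_i$ and use the amplitude balance \eqref{eq:amplitude-balance} to identify the common constant $\Lambda=\gamma^2\kappa_iA_i^2$. Your additional steps are also sound: the positivity check, the one-sided-trace and density argument for the continuous extension, and the interface-point evaluation $\Lambda=\gamma^2(1+R)\mu^2=\gamma^2\cos^2(\pi\gamma/4)$. The paper leaves the first two implicit and obtains the value of $\Lambda$ from $A_2=\cos(\pi\gamma/4)$ instead.
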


\begin{proof}
On $Q_i$, \eqref{eq:sector-gradient} and \eqref{eq:sector-hessian} give
\[
\kappa|\nabla u|^2=\kappa_i\gamma^2A_i^2\,r^{2\gamma-2}, \qquad \kappa|\gradb^2u|_F^2=2(1-\gamma)^2\bigl(\kappa_i\gamma^2A_i^2\bigr)r^{2\gamma-4}.
\]
By \eqref{eq:amplitude-balance} the quantity $\kappa_i\gamma^2A_i^2$ takes one common value on the four quadrants; call it $\Lambda$.
\end{proof}

The first identity also shows directly that each quadrant carries the same energy near the origin.  For a disk $B_h$ centered at the origin with $0<h\le1$, so that $B_h\subset\Omega$,
\begin{equation}\label{eq:energy-scale}
 \int_{B_h}\kappa|\nabla u|^2\,\dd x
 =\frac{\pi\Lambda}{\gamma}h^{2\gamma},
\end{equation}
and each quadrant contributes exactly one quarter.

\medskip \noindent\textbf{A byproduct.} The same transport calculation also removes the numerical nonlinear solve from the usual specification of the benchmark.

\begin{cor}[the benchmark constants in closed form]\label{cor:closed}
The system \eqref{eq:cd-system}, with its admissibility restrictions, is solved by
\begin{equation}\label{eq:closed}
  R=\cot^2\Bigl(\frac{\pi\gamma}{4}\Bigr),
  \qquad
  \rho=\frac{\pi}{4},
  \qquad
  \sigma=\frac{\pi}{4}-\frac{\pi}{2\gamma}=-\frac{\arctan\sqrt R}{\gamma} .
\end{equation}
\end{cor}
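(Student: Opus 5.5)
The plan is to verify directly that the proposed triple satisfies the three equations of \eqref{eq:cd-system} and the admissibility restrictions. The value of $R$ is already forced by \eqref{eq:Rgamma}. Write $\alpha=\pi\gamma/4\in(0,\pi/4)$, so that $R=\cot^2\alpha$ and $\sqrt R=\cot\alpha$. With $\rho=\pi/4$ we get $\rho\gamma=\alpha$ and $(\tfrac\pi2-\rho)\gamma=\alpha$. With $\sigma=\pi/4-\pi/(2\gamma)$ we get $\sigma\gamma=\alpha-\pi/2$ and $(\tfrac\pi2-\sigma)\gamma=\pi/2+\pi\gamma/4-\ldots$; more precisely $(\tfrac\pi2-\sigma)\gamma=\tfrac\pi2\gamma-\alpha+\tfrac\pi2=\alpha+\tfrac\pi2$. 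Every argument in \eqref{eq:cd-system} is therefore $\alpha$ or $\alpha\pm\pi/2$, and the equations reduce to cofunction identities.

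\textbf{Step 1: the three equations.} First, $\tan(\alpha+\pi/2)=-\cot\alpha$, so $-\tan((\tfrac\pi2-\sigma)\gamma)\cot(\rho\gamma)=\cot^2\alpha=R$. Second, $\tan(\alpha-\pi/2)=-\cot\alpha$ gives $\cot(\sigma\gamma)=-\tan\alpha$, so $-\tan(\rho\gamma)\cot(\sigma\gamma)=\tan^2\alpha=1/R$. Third, $-\tan(\sigma\gamma)\cot((\tfrac\pi2-\rho)\gamma)=\cot\alpha\cdot\cot\alpha=R$. These are routine checks.

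\textbf{Step 2: the second form of $\sigma$.} Since $\sqrt R=\cot\alpha$, we have $\arctan\sqrt R=\pi/2-\alpha$, because $\pi/2-\alpha\in(\pi/4,\pi/2)$ lies in the principal range. Then $-\arctan\sqrt R/\gamma=(\alpha-\pi/2)/\gamma=\pi/4-\pi/(2\gamma)$.

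\textbf{Step 3: admissibility.} This is the step needing the most care, because the inequalities involve maxima and minima that depend on whether $\gamma<1$ or $\gamma\ge1$. Here $R>1$ forces $\alpha<\pi/4$, so $0<\gamma<1$, consistent with the standing assumption in \eqref{eq:mode}. The two conditions become:
\begin{itemize}
\item $2\gamma\rho=\pi\gamma/2$ must lie in $(0,\pi\gamma)$, which is clear.
\item $-2\gamma\sigma=\pi-\pi\gamma/2$ must lie in $(\pi-\pi\gamma,\pi)$, which holds since $0<\pi\gamma/2<\pi\gamma$.
\end{itemize}
In the second condition the upper bound is $\min(\pi,2\pi-\pi\gamma)=\pi$ for $\gamma<1$.

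Finally, I would remark that the solution is consistent with the transfer-matrix derivation. The relation $\tau(0)=\gamma\sqrt R\,\mu(0)$ from the eigenvector computation should match the phase of $\mu$ on $Q_1$ in \eqref{eq:cd-mu}, namely $\theta_1=\pi/2-\rho=\pi/4$. This identifies $\rho=\pi/4$ as the value the analysis predicts rather than a lucky guess.

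The corollary as stated only asserts that the triple is \emph{a} solution, so uniqueness need not be proved. If desired, it would follow from the one-dimensionality of $\ker(N+I)$ established above, together with the admissibility ranges pinning down the branch of each phase.
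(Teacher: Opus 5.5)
Your proof is correct and follows essentially the paper's route. You substitute the triple directly into each equation of \eqref{eq:cd-system}, reduce everything to $R=\cot^2(\pi\gamma/4)$ through the cofunction identities at the arguments $\alpha$ and $\alpha\pm\pi/2$, and check the same admissibility intervals for $0<\gamma<1$. The closing remarks on the eigenvector phase and on uniqueness are not needed for the statement.
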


\begin{proof}
The first identity is \eqref{eq:Rgamma}.  It is equivalent to $\arctan\sqrt R=\frac\pi2-\frac{\pi\gamma}{4}$, which on division by $\gamma$ gives the stated $\sigma$.  It remains to substitute.  With $\rho=\pi/4$ and $\sigma\gamma=-\arctan\sqrt R$ we have $\tan(\sigma\gamma)=-\sqrt R$ and $\cot(\sigma\gamma)=-1/\sqrt R$, so the second equation of \eqref{eq:cd-system} reads $1/R=\tan(\pi\gamma/4)/\sqrt R$, which is \eqref{eq:Rgamma} again.  Next, $(\tfrac\pi2-\sigma)\gamma=\frac\pi2+\frac{\pi\gamma}{4}$, whence $\tan\bigl((\tfrac\pi2-\sigma)\gamma\bigr)=-\cot\frac{\pi\gamma}{4}=-\sqrt R$, and the first equation gives $\sqrt R\cot(\pi\gamma/4)=R$.  Finally $(\tfrac\pi2-\rho)\gamma=\frac{\pi\gamma}{4}$, so the third equation gives $\sqrt R\cot(\pi\gamma/4)=R$ as well.  For admissibility, $0<\gamma<1$ gives $0<\gamma<2$; $2\gamma\rho=\pi\gamma/2$ lies in $(0,\pi\gamma)$; and $-2\gamma\sigma=2\arctan\sqrt R=\pi-\frac{\pi\gamma}{2}$ lies in $\bigl(\pi(1-\gamma),\pi\bigr)$.
\end{proof}

Relation \eqref{eq:Rgamma} is a bijection between $\gamma\in(0,1)$ and $R\in(1,\infty)$.  The coefficient is the data; the exponent is what the data produces,
\begin{equation}\label{eq:gamma-from-R}
  \gamma=\frac4\pi\arctan\frac1{\sqrt R}\ \in(0,1),
\end{equation}
a single number attached to the problem, the same in all four quadrants.

The transfer-matrix argument determines the mode only up to a multiplicative constant, so the common value $\kappa_iA_i^2$ is fixed only once a normalization is chosen.  For the normalization in \eqref{eq:cd-mu} the amplitude on $Q_2$ is $A_2=\cos(\rho\gamma)=\cos(\pi\gamma/4)$, and $\kappa_2=1$, so the common weighted amplitude in \eqref{eq:main-gradient} is
\begin{equation}\label{eq:Lambda-closed}
  \Lambda=\gamma^2\,\frac{R}{1+R}=\gamma^2\cos^2\Bigl(\frac{\pi\gamma}{4}\Bigr),
\end{equation}
a function of $\gamma$ alone.

At $\gamma=0.1$, \eqref{eq:closed} returns $R=161.4476387975881$ and $\sigma=-14.92256510455152$, reproducing \eqref{eq:cd-numbers} to every printed digit.  Finally, \eqref{eq:amplitude-ratio} now reads $|A_2/A_1|=\sqrt R$, which at $\gamma=0.1$ is $12.706\ldots$: the solution is more than an order of magnitude larger in the low-coefficient quadrants, while the weighted amplitude is identical in all four.

\medskip \noindent\textbf{Why the symmetry is exceptional.} The transmission conditions alone do not imply it.  On a separate configuration, let $D=(-1,1)^2$, split by $x_2=0$ into $D_\pm$, fix $q>1$, and set
\[
  \widehat\kappa=\begin{cases}1,&x_2<0,\\ q,&x_2>0,\end{cases}
  \qquad
  w=\begin{cases}x_1^2-x_2^2+x_1+x_2,&x_2<0,\\ x_1^2-x_2^2+x_1+q^{-1}x_2,&x_2>0.\end{cases}
\]
Each branch is harmonic and the two traces on $x_2=0$ are both $x_1^2+x_1$.  Differentiating in the direction $e_2$ on both sides, one has $\partial_2w_-=1$ and $\partial_2w_+=q^{-1}$, so $\widehat\kappa_-\partial_2w_-=1=q\cdot q^{-1}=\widehat\kappa_+\partial_2w_+$.  Thus $[w]=[\widehat\kappa\,\partial_2w]=0$ on $x_2=0$, and $w$, with its own trace as Dirichlet data on $\partial D$, solves the corresponding bounded-domain transmission problem.  Its Hessian is $\operatorname{diag}(2,-2)$ on both sides, so $|\nabla^2w|_F^2=8$ on each of them, and, with the Hessian read piecewise on $D_\pm$,
\begin{equation}\label{eq:counterexample}
 \widehat\kappa\,|\nabla^2w|_F^2=
 \begin{cases}
 8,&x_2<0,\\
 8q,&x_2>0.
 \end{cases}
\end{equation}
The quantity that drives the mesh is therefore genuinely unbalanced across this interface.  For shape-regular elements of local diameters $h_-$ and $h_+$ on the two sides, the same $s_K=1$ local scale gives $\varepsilon_-^2\simeq8h_-^4$ and $\varepsilon_+^2\simeq8q\,h_+^4$, so equal-error allocation requires $h_+\simeq q^{-1/4}h_-$: for this problem the correct mesh is material biased.  Figure~\ref{fig:quadratic-interface-mesh} reports the corresponding computation.  What is exceptional in the checkerboard is the extra global matching that removes the coefficient from the weighted quantities altogether.

\section{From hidden symmetry to the radial adaptive mesh}
\label{sec:mesh}

We now turn the continuous identities into a mesh statement.  The standard adaptive principle of equal discretization-error distribution predicts that an adapted mesh makes the local approximation scales \eqref{eq:local-scale} comparable,
\begin{equation}\label{eq:equidistribution}
  \varepsilon_K\simeq\varepsilon
  \qquad(K\in\Tcal),
\end{equation}
with no exact equality on a finite triangulation intended.  For the Kellogg solution this leads to the spatial refinement law derived below.  We do not claim, and do not need, that the resulting mesh is a global best-$N$ approximation; the statement is the spatial prediction of \eqref{eq:equidistribution}.

\medskip \noindent\textbf{Convention on constants.} From here on $A\simeq B$ means $cB\le A\le CB$, and $A\lesssim B$ means $A\le CB$.  The hidden constants are independent of the mesh, of the element, of its position, and of the quadrant; in particular, once the energy weight $\kappa^{1/2}$ has been included, no additional local coefficient factor such as $R^{\pm1/2}$ remains.  They may depend on the fixed member of the Kellogg family, equivalently on $\gamma$ or on $R$, on the shape-regularity parameter, and, where fractional regularity is used, on the fixed exponent $s\in(0,\gamma)$ and the reference element shape.  No uniformity as $R\to\infty$ is intended, since $R$ and $\gamma$ are linked by \eqref{eq:gamma-from-R} and the solution changes with them.

\subsection{Elements away from the singular point}

Let $K$ be interface-fitted with $0\notin\overline K$, and let $r_K=\operatorname{dist}(K,0)$.  Then $u|_K\in H^2(K)$, so $s_K=1$, and \eqref{eq:p1-local} together with Theorem~\ref{thm:main} gives
\begin{equation}\label{eq:outer-contribution}
 \varepsilon_K^2
 =h_K^2\int_K\kappa|\nabla^2u|_F^2\,\dd x
 =2(1-\gamma)^2\Lambda\,h_K^2\int_K r^{2\gamma-4}\,\dd x
 \simeq h_K^4r_K^{2\gamma-4},
\end{equation}
using $r\simeq r_K$ on $K$ and shape regularity in the form $|K|\simeq h_K^2$.  Hence
\begin{equation}\label{eq:outer-error}
 \boxed{\varepsilon_K\simeq h_K^2r_K^{\gamma-2}.}
\end{equation}
No quadrant-dependent factor remains, and by Theorem~\ref{thm:main} no factor attached to the interfaces remains either.  Elements at the same radial scale must therefore have the same asymptotic size if their local errors are to be equidistributed.

Writing $n(r)\simeq h(r)^{-2}$ for the local element density and solving \eqref{eq:outer-error} for $h$, so that $h(r)^2\simeq\varepsilon\,r^{2-\gamma}$, condition \eqref{eq:equidistribution} gives
\begin{equation}\label{eq:density-from-equi}
  n(r)\simeq\varepsilon^{-1}r^{\gamma-2}.
\end{equation}
Equivalently $h(r)\simeq\varepsilon^{1/2}r^{1-\gamma/2}$.  We keep both in terms of $\varepsilon$ for the moment; the conversion to the element budget is made in section~\ref{sec:budget}, once the scale of the innermost elements is known.

At every fixed radius the predicted size is independent of direction, so in the continuous equidistribution model the density may be written $n_*(r)=c_*\varepsilon^{-1}r^{\gamma-2}$.  Hence for any angular interval $I=(\theta_1,\theta_2)$ and any centered annulus $r_1<r<r_2$ contained in $B_1$,
\begin{equation}\label{eq:annular-balance}
  \Ncal_I(r_1,r_2):=\int_{\theta_1}^{\theta_2}\!\!\int_{r_1}^{r_2}n_*(r)\,r\dd r\dd\theta
  =\frac{c_*\varepsilon^{-1}}{\gamma}\,|I|\,\bigl(r_2^\gamma-r_1^\gamma\bigr).
\end{equation}
Equal angular sectors therefore receive equal element mass, and quadrant balance is only the special case $|I|=\pi/2$.

\subsection{Elements attached to the origin}

The only elements that require fractional regularity are the finitely many whose closure contains the origin.  They need it because the $H^2$ seminorm is not integrable there.  Indeed $|\nabla^2u|_F^2\simeq r^{2\gamma-4}$ gives $\int_K|\nabla^2u|_F^2\dd x\simeq\int_0^{h}r^{2\gamma-3}\dd r$, which diverges for $\gamma<1$.  Let $K_{h_i,i}=h_iK_{1,i}$, $i=1,\ldots,4$, where the four reference triangles $K_{1,i}$ are copies of one another, obtained by rotating through multiples of $\pi/2$, each contained in $\overline{Q_i}$ with the origin as one vertex.  The four sizes $h_i$ are left independent; equidistribution will force them to be comparable.  For every fixed $s\in(0,\gamma)$ one has $u|_{K_{h_i,i}}\in H^{1+s}(K_{h_i,i})$, and the appropriate local scale is $\varepsilon_{K_{h_i,i}}=h_i^s\kappa_i^{1/2}|\nabla u_i|_{s,K_{h_i,i}}$.

The homogeneity of $\nabla u$ gives the fractional-seminorm scaling
\begin{equation}\label{eq:fractional-scale}
 |\nabla u_i|_{s,K_{h_i,i}}
 =h_i^{\gamma-s}|\nabla u_i|_{s,K_{1,i}},
\end{equation}
since for a field homogeneous of degree $\gamma-1$ the \emph{squared} Slobodeckij seminorm picks up $h_i^{2\gamma-2s}$ under $x=h_i\xi$ in two dimensions.

It remains to compare the four constants, and for this the scalar amplitude balance is enough.  Rotating $K_{1,i}$ onto $K_{1,1}$ turns $u_i$ into $A_ir^\gamma\cos(\gamma(\theta-\theta_i'))$ for some phase $\theta_i'$, so the four rotated solutions differ only in the amplitude $A_i$ and in a phase. Writing such a function as $\operatorname{Re}(\lambda z^\gamma)$ with $|\lambda|=|A_i|$, its gradient is $|A_i|$ times a fixed field composed with a \emph{constant} rotation, because a shift of phase multiplies $\lambda$ by a unit complex number.  A constant rotation is an isometry, so it leaves distances, the area measure, and the vector differences $|\mathbf v(x)-\mathbf v(y)|$ unchanged, and therefore leaves the seminorm unchanged:
\begin{equation}\label{eq:fractional-amplitude}
  |\nabla u_i|_{s,K_{1,i}}=|A_i|\,S_{s,K_1},
\end{equation}
with $S_{s,K_1}$ independent of $i$.

Multiplying by $\kappa_i^{1/2}$ and using \eqref{eq:amplitude-balance},
\[
\kappa_i^{1/2}|\nabla u_i|_{s,K_{1,i}} =S_{s,K_1}\sqrt{\kappa_iA_i^2} =:C_{s,K_1},
\]
one constant independent of $i$.  Combining this with \eqref{eq:fractional-scale}, the powers of $h_i$ add to $h_i^s\cdot h_i^{\gamma-s}=h_i^\gamma$, so
\begin{equation}\label{eq:origin-balance}
 \boxed{
 \varepsilon_{K_{h_i,i}}
 =C_{s,K_1}\,h_i^{\gamma},
 \qquad i=1,\ldots,4.}
\end{equation}
Only three properties of the Slobodeckij seminorm were used: it is invariant under rigid motions, it scales linearly when the field is multiplied by a constant, and it obeys the dilation scaling \eqref{eq:fractional-scale}.  The same conclusion therefore holds for any coefficient-weighted local measure of $\sqrt\kappa\,\nabla u$ with those three properties.

The coefficient-weighted constant is the same in all four quadrants.  Imposing \eqref{eq:equidistribution} on these elements gives $C_{s,K_1}h_i^{\gamma}\simeq\varepsilon$ for each $i$, hence
\begin{equation}\label{eq:hmin}
  \boxed{h_i\simeq h_{\min}:=\varepsilon^{1/\gamma},\qquad i=1,\ldots,4 .}
\end{equation}
The origin-attached elements of a general shape-regular triangulation need not form such a quartet; the same scaling applies to each of them, with uniformly comparable constants.  This also matches the outer law: setting $r\simeq h$ in $h(r)\simeq\varepsilon^{1/2}r^{1-\gamma/2}$ gives $h\simeq\varepsilon^{1/2}h^{1-\gamma/2}$, that is $h\simeq\varepsilon^{1/\gamma}$ again.

\subsection{The element budget}
\label{sec:budget}

Both cases are now expressed through the single parameter $\varepsilon$, and the count can be closed.  Shape regularity bounds the number of elements meeting the origin, so those contribute $O(1)$ elements.  Away from them, \eqref{eq:density-from-equi} gives
\[
  N_{\rm out}\simeq\varepsilon^{-1}\int_{h_{\min}}^{1}r^{\gamma-2}\,r\dd r
  =\frac{\varepsilon^{-1}}{\gamma}\bigl(1-h_{\min}^{\gamma}\bigr)
  \simeq\varepsilon^{-1},
\]
the last step because $h_{\min}^{\gamma}=\varepsilon\to0$ by \eqref{eq:hmin}.  On the fixed region $\Omega\setminus B_1$ one has $r\simeq1$ and $n\simeq\varepsilon^{-1}$, so it contributes another $O(\varepsilon^{-1})$ elements and does not change the scaling.  Hence
\begin{equation}\label{eq:common-error}
  N=\#\Tcal\simeq\varepsilon^{-1},
  \qquad\text{that is}\qquad
  \varepsilon\simeq N^{-1},
\end{equation}
and substituting into the outer law and into \eqref{eq:hmin},
\begin{equation}\label{eq:mesh-law}
 \boxed{
 h(r)\simeq N^{-1/2}r^{1-\gamma/2},
 \qquad
 n(r)\simeq N r^{\gamma-2},
 \qquad
 h_{\min}\simeq N^{-1/\gamma}.}
\end{equation}

We have therefore reached the main mesh conclusion using one adaptive principle in both cases.  The exponent $\gamma$ determines how rapidly the mesh must grade toward the interface crossing.  The coefficient-weighted amplitude balance removes the angular variable from that grading altogether.  For Kellogg's checkerboard there is no angular preference at all.

Finally, $N$ equidistributed local contributions of size $\varepsilon\simeq N^{-1}$ give the global coefficient-weighted approximation scale
\[
\left(\sum_{K\in\Tcal}\varepsilon_K^2\right)^{1/2} \simeq N^{-1/2}.
\]
Thus the same calculation recovers the familiar $N^{-1/2}$ scale for $\bigl(\sum_K\varepsilon_K^2\bigr)^{1/2}$, but the rate alone does not contain the spatial information in \eqref{eq:origin-balance} and \eqref{eq:mesh-law}.

\section{Why the mesh picture contains information beyond the rate}
\label{sec:diagnostic}

The hidden symmetry makes the Kellogg mesh a useful visual diagnostic, because it fixes a continuous standard against which a computed mesh can be compared.  The standard is \eqref{eq:mesh-law}: the target density is a function of $r$, and by Theorem~\ref{thm:main} it stays one across the interfaces.

An interface-fitted triangulation of course contains the axes in its skeleton, since they are unions of edges by construction; what the identities forbid is any concentration of elements along them.  A correct Kellogg mesh looks like a mesh built for a radially symmetric problem, even though the problem has a discontinuous coefficient and an asymmetric solution.  The picture is therefore not a separate empirical rule.  It is what the theory predicts, read from the mesh rather than from a convergence plot.

Angular structure in the mesh therefore has a specific interpretation.  By \eqref{eq:main-hessian} and \eqref{eq:outer-error} the coefficient-weighted local approximation difficulty at a fixed positive radius is the same in every direction, so a method that marks the axes, or that places systematically more elements on one side of them, is not responding to a feature of the coefficient-weighted local approximation distribution: that distribution has no angular feature to respond to.  A local quantity carrying the wrong power of the coefficient does have one.  The unweighted quantities available here are asymmetric in either direction: by \eqref{eq:amplitude-ratio} the raw gradient is larger by a factor $\sqrt R$ in the low-coefficient quadrants, while dropping the reciprocal coefficient scaling from a residual edge term multiplies its squared contribution by the corresponding edge diffusion scale and therefore favors the high-coefficient ones.  Which way the bias points depends on which power is wrong; what does not depend on it is that an angular dependence has been introduced where the energy norm has none, and that the resulting bias is persistent, the factor being independent of the mesh size.

Figure~\ref{fig:opening-meshes} shows the second of these two cases.  Panel (a) is what the prediction looks like when it is met: the refinement is concentric, and there is no visible excess refinement along the coefficient interfaces.  Panel (b) fails on both counts, and comparing element counts by quadrant puts a number on the failure, $8.35$ against $1.03$ at comparable budgets.  That count is only a convenient scalar test, and a coarse one: it detects a preference between opposite pairs of quadrants and would report nothing about refinement concentrated along the axes themselves, which the concentric standard also forbids.  Conversely, an absence of angular structure does not by itself certify a method, since the radial grading has to be correct as well.

The other failure mode noted in section~\ref{sec:intro}, refinement following the coefficient interfaces, is of a different kind.  It is not a question of the coefficient weight at all.  The transmission condition is $[\kappa\partial_nu]=0$, so the physical flux is continuous while $\nabla u$ itself has a normal jump.  A recovery into a space of continuous vector fields cannot represent that jump, and registers a feature of the exact solution as discretization error; the remedy is to recover the physical flux instead \cite{CZ09}.  Estimators built on jumps of the discrete normal flux do not have this particular recovery-space defect.  The two failures therefore have different causes: the angular bias comes from an incorrect coefficient weighting in the local measure, and is the subtle one, since the estimator may otherwise be entirely reasonable; refinement along the interfaces comes from a recovery space that does not respect the transmission structure of the exact gradient, and is the cruder one.

The aim is not to classify estimators by their mesh pictures but to fix the continuous spatial standard against which a Kellogg computation can be read, and such a standard is needed because the usual report does not supply one.  The Kellogg example is most often presented through its convergence history, and as an answer to the question of rate optimality that report is complete.  It is silent on the present question: the optimal global slope $N^{-1/2}$ cannot by itself certify the spatial allocation of the degrees of freedom, which is the distinction recorded in \eqref{eq:rate-not-mesh}.  The mesh picture contains precisely that missing information.

The radial standard is also exceptional, and the counterexample \eqref{eq:counterexample} shows why.  There the solution and the physical flux are continuous, yet the coefficient-weighted difficulty jumps by the factor $q$ across the interface, and the correct mesh is correspondingly material biased.  A radial mesh would be the wrong one for that problem.  What the checkerboard supplies is the extra relation \eqref{eq:interface-eigenvector} between $\mu$ and $\kappa\mu'$, forced at all four interfaces at once by the sector transport of section~\ref{sec:symmetry} together with the closing condition \eqref{eq:closing}.

This is what Figure~\ref{fig:quadratic-interface-mesh} reports.  Writing $N_\pm$ for the numbers of elements in the two half-squares, which have equal area, $N_+/N_-$ is also the ratio of the mean element densities, and $h_+\simeq q^{-1/4}h_-$ predicts $N_+/N_-\simeq\sqrt q=4$ against the computed $3.934$.  The estimator that produces the radial Kellogg mesh therefore produces a strongly material-biased one here.  On the checkerboard, radiality is what robustness looks like, so a mesh that is not radial is visible evidence that the computation is responding to something absent from the correct local difficulty; which of the two failures it is can then be read from the shape of the departure.  What does not transfer is the picture.  For another problem the spatial standard has to be worked out from that problem's own coefficient-weighted local difficulty before a computed mesh can be read at all.

For the Kellogg checkerboard, robust error estimation, locally optimal approximation, and mesh allocation all rest on the same coefficient weighting.  The coefficient-weighted local difficulty is radial, so a correct adaptive mesh should display the corresponding radial refinement and nothing else.  The same balance on the origin-attached elements holds for any local measure of $\sqrt\kappa\,\nabla u$ that is rotation invariant, scales linearly when the field is multiplied by a constant, and has the same dilation scaling.  The familiar concentric Kellogg mesh is what the hidden energy symmetry of the continuous problem looks like on a triangulation.

Related coefficient-weighted structure also arises for Stokes interface problems with piecewise constant viscosity \cite{LZ26}; that setting will be considered separately.

\end{document}